\theoremstyle{plain}
\newtheorem{theorem}{Theorem}[section]
\newtheorem{lemma}[theorem]{Lemma}
\newtheorem{corollary}[theorem]{Corallary}
\newtheorem*{stht}{Skew-torsion holonomy theorem}
\theoremstyle{definition}
\newtheorem{definition}[theorem]{Definition}
\theoremstyle{remark}
\newtheorem{remark}[theorem]{Remark}
\newtheorem*{notation}{Notation}
\newtheorem{example}[theorem]{Example}
\DeclareMathOperator{\Ad}{Ad}
\DeclareMathOperator{\Aff}{Aff}
\DeclareMathOperator{\diag}{diag}
\DeclareMathOperator{\Exp}{Exp}
\DeclareMathOperator{\grad}{grad}
\DeclareMathOperator{\Hol}{Hol}
\DeclareMathOperator{\id}{id}
\DeclareMathOperator{\Iso}{Iso}
\DeclareMathOperator{\Lie}{Lie}
\DeclareMathOperator{\SO}{SO}
\DeclareMathOperator*{\sspan}{span}
\DeclareMathOperator{\Spin}{Spin}
\DeclareMathOperator{\SU}{SU}
\renewcommand{\gg}{\mathfrak g}
\newcommand{\gh}{\mathfrak h}
\newcommand{\hol}{\mathfrak{hol}}
\newcommand{\gk}{\mathfrak k}
\newcommand{\gm}{\mathfrak m}
\newcommand{\so}{\mathfrak{so}}
\newcommand{\tr}{\mathfrak{tr}}
\newcommand{\gX}{\mathfrak X}
\newcommand{\bbr}{\mathbb R}
\newcommand{\bbv}{\mathbb V}
\begin{document}

\title[A Berger-type theorem]{A Berger-type theorem for metric connections with skew-symmetric torsion}
\author{Silvio Reggiani}
\address{Facultad de Matem\'atica, Astronom\'ia y F\'isica, Universidad Nacional de C\'ordoba, Ciudad Universitaria, 5000 C\'ordoba, Argentina}
\email{reggiani@famaf.unc.edu.ar}

\subjclass[2010]{Primary 53C30; Secondary 53C35}
\keywords{Metric connection, flat connection, skew-symmetric torsion, holonomy}
\date{\today}

\begin{abstract}
We prove a Berger-type theorem which asserts that if the orthogonal subgroup generated by the torsion tensor (pulled back to a point by parallel transport) of a metric connection with skew-symmetric torsion is not transitive on the sphere, then the space must be locally isometric to a Lie group with a bi-invariant metric or its symmetric dual (we assume the space to be locally irreducible). We also prove that a (simple) Lie group with a bi-invariant metric admits only two flat metric connections with skew-symmetric torsion: the two flat canonical connections. In particular, we get a refinement of a well-known theorem by Cartan and Schouten. Finally, we show that the holonomy group of a metric connection with skew-symmetric torsion on these spaces generically coincides with the Riemannian holonomy.
\end{abstract}

\maketitle

\section{Introduction}

The family of metric connections on a Riemannian manifold $M$ which have the same geodesics as the Levi-Civita connection is a distinguished class among the family of all connections on $M$. This family attracted the attention of \'E.\ Cartan in the early 20th century. Since then, many mathematicians have been concerned with its study. In the last few years, these connections have also been studied, in a modern approach, because of its applications to physics (string and superstring theory; see \cite{agricola-2006} and the references therein for some examples).

Let $\nabla$ be the Levi-Civita connection of $M$ and let us consider a metric connection $\tilde\nabla$ on $M$ which has the same geodesics as $\nabla$. It is a well-known fact that this is equivalent to the difference tensor $D = \nabla - \tilde\nabla$ being totally skew-symmetric. That is, it defines a $3$-form by contracting with the metric tensor of $M$. In such a case, the torsion tensor of $\tilde\nabla$ is obtained as a constant multiple of $D$, and so we say that $\tilde\nabla$ is a \textit{connection with skew-symmetric torsion}. One of the more remarkable examples of this kind of geometry is the case of the naturally reductive spaces, endowed with the so-called canonical connection. In a recent work \cite{olmos-reggiani-2012} it is shown that, for irreducible compact naturally reductive spaces, the canonical connection is essentially unique (i.e., provided the space is not a sphere, a real projective space or a Lie group with a bi-invariant metric). Moreover, in a forthcoming joint work with C.\ Olmos, it is proved that the same is true in the non-compact case (provided the space is not the dual of a compact Lie group; this includes the hyperbolic $3$-space, which is the dual of $S^3 = \Spin(3)$). On the other hand, the only geometries admitting a flat metric connection with skew-symmetric torsion are the compact Lie groups and the sphere $S^7$. This fact is due to Cartan and Schouten \cite{cartan-schouten-1926} (see \cite{agricola-friedrich-2010} for a modern proof that does not depend on the classification of the symmetric spaces).

The main goal of this short article is to establish a Berger-type result, Theorem \ref{main}, for connections of this kind. In fact, given a metric connection $\tilde\nabla$ on $M$ with skew-symmetric torsion, we have associated an orthogonal Lie subgroup $H(M, \tilde\nabla) \subset \SO(\dim M)$. Our Berger-type theorem asserts that if $\{e\} \neq H(M, \tilde\nabla) \neq \SO(\dim M)$, then $M$ is isometric to a (simple) Lie group with a bi-invariant metric or its symmetric dual ($M$ is assumed to be complete, simply connected and irreducible). Moreover, if the torsion tensor of $\tilde\nabla$ is invariant under the resulting Lie group, then $\tilde\nabla$ is a canonical connection on $M$. Recall that the group $H(M, \tilde\nabla)$ is obtained by pulling back the $\tilde\nabla$-torsion by (Riemannian) parallel transport to a given point in $M$. Notice the analogous construction for the holonomy group, which is obtained in this way from the curvature tensor (the Ambrose-Singer theorem). We need to deal with the group $H(M, \tilde\nabla)$ instead of $\Hol(\tilde\nabla)$ since this last group does not seem to carry enough information (see the flat examples in Section \ref{4}). In fact, we have to  enlarge $\Hol(\tilde\nabla)$ to $H(M, \tilde\nabla)$ to get a Berger-type theorem. 

The definition of the group $H(M, \tilde\nabla)$ is in the spirit of studying geometries which admit a metric connection with skew-symmetric torsion, and Theorem~\ref{main} characterizes these spaces when the torsion is not generic.

We wish to clarify briefly what we mean by a Berger-type theorem. Informally, it means a result which asserts that if a certain orthogonal subgroup is ``generic'', then, our object is ``symmetric''. The fanciest example is the Berger holonomy theorem \cite{berger-1955,olmos-2005}, which asserts that if the holonomy group of an irreducible Riemannian space is not transitive (on the sphere of the tangent space), then the space must be locally symmetric. Another geometric Berger-type theorem is due to Thorbergsson \cite{thorbergsson-1991,olmos-1993}: if $M$ is a submanifold of the sphere with constant principal curvatures and the normal holonomy group of $M$ acts irreducibly and non-transitively, then $M$ is the orbit of an $s$-representation. In \cite{console-discala-olmos-2011} a Berger-type theorem for complex submanifolds is proved: if $M$ is a complete and full complex submanifold of $\mathbb CP^n$ and the normal holonomy of $M$ is non-transitive, then $M$ is the (projectivized) orbit of an irreducible Hermitian $s$-representation. A famous algebraic Berger-type theorem is the so-called Simons holonomy theorem \cite{simons-1962,olmos-2005b}: every irreducible and non-transitive holonomy system must be symmetric. Recall that Simons theorem implies the Berger theorem.

In order to prove Theorem \ref{main}, we work with the concept of skew-torsion holonomy systems, and we make use of the skew-torsion holonomy theorem \cite{olmos-reggiani-2012,nagy-2007} (which is a Berger-type theorem!). In fact, $[T_pM, D^p, H(M, \tilde\nabla)]$ turns out to be a skew-torsion holonomy system. Notice that the only transitive case for an skew-torsion holonomy system is the full orthogonal group (and this explains the assumption that $H(M, \tilde\nabla) \neq \SO(\dim M)$ in Theorem \ref{main}).

As an application of our results we study the holonomy group of metric connections with skew-symmetric torsion on Lie groups. Let $G$ be a simple Lie group with a bi-invariant metric and let $\tilde\nabla$ be a metric connection with skew-symmetric torsion on $G$. We can summarize the results obtained as follows:

\begin{itemize}
\item In the flat case, $\tilde R = 0$, we have that $\tilde\nabla$ is one of the two flat canonical connections, whose torsion tensor is given by $\tilde T(X, Y) = \pm[X,Y]$ (for these connections left or right invariant vector fields are parallel, depending on the chosen sign). See Theorem \ref{flat}, which is a refinement of the Cartan-Schouten theorem \cite{cartan-schouten-1926, agricola-friedrich-2010}.
\item In the general case, when $\tilde\nabla$ is not flat and $H(G,\tilde\nabla) \neq \SO(\dim G)$ we have that $\Hol(\tilde\nabla) = G$. See Theorem \ref{hol=G}.
\end{itemize}


\section{Preliminaries}

In this section we wish to recall some results on skew-torsion holonomy systems and naturally reductive spaces that we use throughout this article. The general reference for this section is \cite{olmos-reggiani-2012}. 

A \textit{skew-torsion holonomy system} is a triple $[\bbv, \Theta, G]$ where $\bbv$ is a Euclidean space, $G$ is a connected Lie subgroup of $\SO(\bbv)$ and $\Theta$ is a totally skew-symmetric $1$-form which takes values in $\gg = \Lie(G)$. That is to say, $\Theta: \bbv \to \gg$ is linear and such that $(x, y, z) \mapsto \langle \Theta_xy, z\rangle$ is an algebraic $3$-form on $\bbv$. We say that such a triple is \textit{irreducible} if $G$ acts irreducibly on $\bbv$, \textit{transitive} if $G$ acts transitively on the unit sphere of $\bbv$, and \textit{symmetric} if $g_*(\Theta) = \Theta$ for all $g \in G$ (where $g_*(\Theta)_x = g \circ \Theta_{g^{-1}(x)} \circ g^{-1}$).

The definition of skew-torsion holonomy systems is motivated by the holonomy systems introduced by J.\ Simons in \cite{simons-1962}, where he considered an algebraic curvature tensor (instead of a $1$-form) taking values in $\gg$. Skew-torsion holonomy systems arise in a natural way in a geometric context, on considering the difference between two metric connections with the same geodesics as the Levi-Civita connection. There is an analogue to the Simons holonomy theorem, which asserts that an irreducible and non-transitive holonomy system must be symmetric. This result is actually stronger since for skew-torsion holonomy systems the only transitive case is the full orthogonal group $G = \SO(\bbv)$.

\begin{stht}[see \cite{nagy-2007,olmos-reggiani-2012}]
Let $[\bbv, \Theta, G]$, $\Theta \neq 0$, be an irreducible skew-torsion holonomy system with $G \neq \SO(\bbv)$. Then $[\bbv, \Theta, G]$ is symmetric and non-transitive. Moreover, 
\begin{enumerate}
\item $(\bbv, [\cdot,\cdot])$ is an orthogonal simple Lie algebra, of rank at least $2$, with respect to the bracket $[x,y] = \Theta_xy$;
\item $G = \Ad(H)$, where $H$ is the connected Lie group associated with the Lie algebra $(\bbv, [\cdot,\cdot])$;
\item $\Theta$ is unique, up to a scalar multiple.
\end{enumerate}
\end{stht}

Now, we refer briefly to some recent results on naturally reductive spaces. A homogeneous space $M = G/H$ endowed with a $G$-invariant metric is said to be a \textit{naturally reductive space} if there exists a reductive decomposition $\gg = \gh \oplus \gm$, where $\gg = \Lie(G)$, $\gh = \Lie(H)$ and $\gm$ is an $\Ad(H)$-invariant subspace of $\gg$ such that the geodesics through $p = eH$ are given by $\Exp(tX)\cdot p$, $X \in \gm$. The reductive complement $\gm$ induces a so-called canonical connection $\nabla^c$ on $M$. The above definition is equivalent to the fact that the Levi-Civita connection and the canonical connection have the same geodesics.

In \cite{olmos-reggiani-2012} it is proved that the canonical connection is unique (in the compact irreducible case) unless $M$ is isometric to a sphere, a real projective space or a compact Lie group endowed with a bi-invariant metric. As a consequence of this result it follows that the connected component of the $\nabla^c$-affine group (i.e., the subgroup of diffeomorphisms of $M$ that preserve $\nabla^c$) coincides with the connected component of the isometry group, $\Aff_0(\nabla^c) = \Iso_0(M)$, except if $M$ is a sphere or a real projective space. Moreover, if there is an isometry which does not preserve the canonical connection, then $M$ is isometric to a Lie group with a bi-invariant metric.

\begin{remark}[see Theorem 1.1 and Remark 6.1 in \cite{olmos-reggiani-2012}]\label{line}
If $M$ is a simple Lie group endowed with a bi-invariant metric, then the family of canonical connections on $M$ is the affine line
$$\mathcal L = \{t\nabla + (1-t)\nabla^c: t \in \bbr\},$$
where $\nabla$ is the Levi-Civita connection of $M$ and $\nabla^c \neq \nabla$ is a fixed canonical connection on $M$. This is due to the fact that the difference tensor between two canonical connections is unique up to a scalar multiple.
\end{remark}

\section{The Berger-type theorem}

Let $(M, \langle\cdot,\cdot\rangle)$ be a Riemannian manifold. We denote by $\nabla$ the Levi-Civita connection of $M$ and by $\tau_c$ the Riemannian parallel transport along a curve $c$ on $M$. Let $\tilde\nabla$ be a metric connection on $M$. We say that $\tilde\nabla$ is a metric connection with \emph{skew-symmetric torsion} if the $\tilde\nabla$-geodesics coincide with the Riemannian geodesics. It is well-known that this is equivalent to the difference tensor
$$D = \nabla - \tilde\nabla$$
being totally skew-symmetric, that is $(u,v,w) \mapsto \langle D_uv, w\rangle$
defines a $3$-form on $M$. Recall that the torsion tensor $\tilde T$ of $\tilde\nabla$ is obtained as
$$D_XY = -\frac{1}{2}\tilde T(X, Y),$$
for all $X, Y \in \gX(M)$.



\begin{definition} Let $M$ be a Riemannian manifold, and let $\tilde\nabla$ be a metric connection on $M$ with skew-symmetric torsion. Let $p \in M$.
\begin{enumerate}
\item We define $\gh_p(M, \tilde\nabla) \subset \so(T_pM)$ as the Lie subalgebra generated by elements of the form
$$(\tau_c)_*(D_v) := (\tau_c)^{-1} \circ D_v$$
where $c$ is taken among all the piecewise smooth curves $c: [0, 1] \to M$ with $c(0) = p$ and $v$ is arbitrary in $T_{c(1)}M$.
\item We define $H_p(M, \tilde\nabla) \subset \SO(T_pM)$ as the connected Lie subgroup with Lie algebra $\gh_p(M, \tilde\nabla)$. 
\end{enumerate}
\end{definition}

\begin{remark} We remark some points on the above definition.
\begin{enumerate}
\item If $M$ is connected, then the group $H_p(M, \tilde\nabla)$ does not depend on the base point $p$. More precisely, $H_q(M, \tilde\nabla)$ is conjugated to $H_p(M, \tilde\nabla)$ by (Riemannian) parallel transport along any curve joining $p$ with $q$. Sometimes, when it is clear from the context, we will denote the group $H_p(M, \tilde\nabla)$ just by $H(M, \tilde\nabla)$.
\item If $M$ is locally irreducible, then $H_p(M, \tilde\nabla)$ is a closed subgroup of the orthogonal group $\SO(T_pM)$. See Remark \ref{rem:closed} after Theorem \ref{main}.
\item Our definition of $H_p(M, \tilde\nabla)$ is motivated by the Ambrose-Singer theorem \cite{ambrose-singer-1953}. In fact, recall that if $\nabla'$ is a (linear) connection on $M$, then the holonomy algebra $\hol_p(\nabla')$ at $p$ is spanned by elements of the form
$$(\tau_c)_*(R'_{v, w}) = (\tau_c)^{-1} \circ R'_{v, w}$$
where $R'$ is the curvature tensor of $\nabla'$ and $v, w \in T_{c(1)}M$. In our particular case, we find it more interesting to work with the difference tensor $D$, instead of the curvature $\tilde R$, because, as we shall see later, the group $H_p(M, \tilde\nabla)$ carries nontrivial information about the geometry of $(M, \tilde\nabla)$ even in the flat case $\tilde R = 0$. (Recall that in the flat case $M$ is the Riemannian product of Lie groups with bi-invariant metrics and $7$-dimensional spheres of constant curvature \cite{cartan-schouten-1926, agricola-friedrich-2010}.) 
\end{enumerate}
\end{remark}

\begin{remark}[Related concepts in the literature]\label{rem:lit}
We wish to recall that the definition of $H_p(M, \tilde\nabla)$ is actually implicit in the work of Agricola and Friedrich \cite{agricola-friedrich-2004} (see also \cite{nagy-2007, olmos-reggiani-2012}).  In fact, let $\bbv$ be a Euclidean space and let $T \in \Lambda^3(\bbv)$ be an algebraic $3$-form on $\bbv$. In \cite{agricola-friedrich-2004}, 
$$\gg^*_T := \sspan\{T_v: v \in \bbv\} \subset \so(\bbv) \qquad \text{(algebraic span)}$$
and
$$\gh^*_T := [\gg^*_T, \gg^*_T]$$
are defined. We have that $\gg^*_T$ is semisimple, and $\gh^*_T$ coincides with the holonomy algebra of the metric connection with (constant) skew-symmetric torsion on $\bbv$ defined as $\nabla^T = \nabla - T$. These holonomy algebras have been studied exhaustively in \cite{agricola-friedrich-2004} (and also in \cite{nagy-2007, olmos-reggiani-2012}).

Now, let  $\tilde\nabla$ be a metric connection with skew-symmetric torsion on a Riemannian manifold $M$. Observe that, if $T = D^p \in \Lambda^3(T_pM)$ is the difference tensor $D = \nabla - \tilde\nabla$ specialized at the point $p \in M$, then
$$\gh^*_T \subset \gh_p(M, \tilde\nabla).$$
Informally, we say that $\gh_p(M, \tilde\nabla)$ carries more information (for example, $D^p$ could be the zero $3$-form on $T_pM$ for some points).

In the particular case of a naturally reductive space $M = G/H$ and a canonical connection $\nabla^c$ different from the Levi-Civita connection, we have the equality
$$\gh^*_T = \gh_p(M, \nabla^c)$$
for all $p \in M$, since $\nabla^c$ is $G$-invariant (see \cite{olmos-reggiani-2012}, for instance). More generally, if $\tilde\nabla = \nabla - f(\nabla - \nabla^c)$, where $f: M \to \bbr$ is a smooth function, we have that 
$$\gh^*_T = \gh_p(M, \tilde\nabla)$$
provided $f(p) \neq 0$.
\end{remark}


\begin{example}
For metric connections with constant skew-symmetric torsion on $\bbr^n$, i.e., that of the form $\nabla^T = \nabla - T$ where $T \in \Lambda^3(\bbr^n)$, we have that $\gh_p(\bbr^n, \nabla^T) = \gh^*_T$ has been calculated in \cite{agricola-friedrich-2004} for a large family of examples. More generally, $\gh_p(\bbr^n, \tilde\nabla) = \gh^*_T$, for  $\tilde\nabla = \nabla - fT$ where $f: \bbr^n \to \bbr$ is a smooth non-zero function.

In particular, for $n = 4$, $\gh^*_T = \{0\}$ or $\gh^*_T = \so(3)$, since every non-zero $3$-form on $\bbr^4$ is equivalent to $e_{123}$ (and in dimension $3$ there is only one $3$-form up to multiples).
\end{example}

\begin{example}
Let us consider the skew-symmetric $3$-form on $\bbr^4$
$$D = f e_{123} + g e_{234}$$
where $f, g$ are smooth, non-zero, real-valued functions such that $f(x) \neq 0$ implies $g(x) = 0$. If $\tilde\nabla = \nabla - D$, then $\gh_p(\bbr^4, \tilde\nabla) = \so(4) = \so(3) \oplus \so(3)$ for all $p \in \bbr^4$. However, if $T = D^p$, then $\gh^*_T = \{0\}$ or $\gh^*_T = \so(3)$.
\end{example}

\begin{example}[Examples from naturally reductive spaces]
Let $M^n = G/H$ be a simply connected, irreducible naturally reductive space, and let $\nabla^c$ be the associated canonical connection. It follows from \cite{olmos-reggiani-2012} that $\nabla^c$ is unique, except in certain cases (and in such cases the space must be symmetric). More precisely, the canonical connection is unique except when $M = S^n$, or $M$ is a compact Lie group with a bi-invariant metric or its symmetric dual. (Actually, the uniqueness result in \cite{olmos-reggiani-2012} is only for the compact case, and the general case follows from a forthcoming joint work with C.\ Olmos.)

If $M = K$ is a Lie group with a bi-invariant metric, then $H_p(M, \nabla^c) = K$, identified in the natural way. This is explained in detail in Section \ref{4}, but we wish to point out that one can get non-compact examples when $M$ is the symmetric dual of a compact Lie group $K$. In this case, we also have that $H_p(M, \nabla^c) = K$, since canonical connections on $M$ are in a one-one correspondence with canonical connections on $K$.

When $M^n = G/H$ is not a symmetric space or $M = S^n$, we have that $H_p(M, \nabla^c) = \SO(n)$, as it will follow from our main result.
\end{example}




Let us study the group $H_p(M, \tilde\nabla)$ from a holonomic point of view, and the implications of its properties on the geometry of $M$.

First, observe that if $g \in \Hol(\nabla)$, the holonomy group of $M$, then $gH_p(M, \tilde\nabla)g^{-1} \subset H_p(M, \tilde\nabla)$. So,
$$\Hol(\nabla) \subset N(H_p(M, \tilde\nabla)),$$
where $N(H_p(M, \tilde\nabla))$ is the normalizer of $H_p(M, \tilde\nabla)$ in the orthogonal group.

\begin{lemma}\label{H=N(H)}
If $H_p(M, \tilde\nabla)$ acts irreducibly on $T_pM$, then $$H_p(M, \tilde\nabla) = N(H_p(M, \tilde\nabla)).$$
As a consequence, $\Hol(\nabla) \subset H_p(M, \tilde\nabla)$.
\end{lemma}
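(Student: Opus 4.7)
My plan is to reduce the equality $H_p = N(H_p)$ to an infinitesimal computation, the key assertion being $\mathfrak n_{\so(T_pM)}(\gh_p) = \gh_p$. I first dispose of the trivial case $H_p = \SO(T_pM)$, where $N(H_p) = \SO(T_pM) = H_p$. Otherwise $H_p$ is a proper irreducible subgroup, so I choose a curve $c$ for which $\Theta := (\tau_c)_*(D^{c(1)}) \in \Lambda^3(T_pM)$ is non-zero (if no such $c$ exists, $D \equiv 0$, so $H_p$ is trivial and cannot act irreducibly when $\dim M > 1$). Since $\Theta$ takes values in $\gh_p$, the triple $[T_pM, \Theta, H_p]$ is an irreducible, non-transitive skew-torsion holonomy system. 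The skew-torsion holonomy theorem then endows $T_pM$ with the structure of an orthogonal simple Lie algebra $\mathfrak k$ via $[x,y] := \Theta_x y$, with $\gh_p = \operatorname{ad}(\mathfrak k)$ and $H_p = \Ad(K)$, where $K$ is the connected Lie group with Lie algebra $\mathfrak k$.

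The core of the proof is then a derivation argument. Take $X \in \so(T_pM)$ satisfying $[X, \gh_p] \subset \gh_p$. Using the injectivity of $\operatorname{ad}$ on the centerless simple algebra $\mathfrak k$, I define a linear map $D: \mathfrak k \to \mathfrak k$ by $\operatorname{ad} D(y) := [X, \operatorname{ad} y]$. A short Jacobi-identity computation, starting from $\operatorname{ad}[y,z] = [\operatorname{ad} y, \operatorname{ad} z]$, shows that $D$ is a derivation of $\mathfrak k$, so by simplicity $D = \operatorname{ad} x_0$ for some $x_0 \in \mathfrak k$. Consequently $X - \operatorname{ad} x_0 \in \so(T_pM)$ commutes with $\operatorname{ad}(\mathfrak k)$. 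Since an orthogonal simple Lie algebra is compact, and hence absolutely simple, the adjoint representation is absolutely irreducible; the real Schur lemma forces the commutant to be $\bbr \cdot \id$, whose only skew-symmetric element is $0$. Therefore $X = \operatorname{ad} x_0 \in \gh_p$, establishing $\mathfrak n_{\so(T_pM)}(\gh_p) = \gh_p$.

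This shows $H_p$ coincides with the identity component of $N(H_p)$ in $\SO(T_pM)$, which is what the ``as a consequence'' clause really needs: combined with the inclusion $\Hol(\nabla) \subset N(H_p)$ recorded just before the lemma and the connectedness of the restricted holonomy, it yields $\Hol(\nabla) \subset H_p$. The step I expect to be the main obstacle is the absolute-simplicity claim; without it the commutant of $\operatorname{ad}(\mathfrak k)$ could be $\mathbb C$ or $\mathbb H$, and the extra generators (an orthogonal complex structure, for example) would a priori enlarge the normalizer beyond $\gh_p$. Ensuring that ``orthogonal simple'' in the skew-torsion holonomy theorem really forces compactness, and hence the $\bbr$-commutant, is what makes the argument go through; a secondary, purely group-level subtlety (possible disconnected contributions from diagram automorphisms lying in $\SO(T_pM)$) is avoided by phrasing the consequence for the restricted holonomy group.
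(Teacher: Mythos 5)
Your proof is correct, but it takes a genuinely different route at the decisive step. The paper also builds the irreducible skew-torsion holonomy system $[T_pM,\Theta,H_p(M,\tilde\nabla)]$ (taking $\Theta=D^p$ at a point where it is non-zero, whereas you more carefully pull $D$ back along a curve), but it then quotes Lemma~3.4 of \cite{olmos-reggiani-2012} to conclude that $H_p(M,\tilde\nabla)$ acts as an $s$-representation and finishes by citing the standard fact that an irreducible non-transitive $s$-representation coincides with its normalizer \cite[pp.~192]{berndt-console-olmos-2003}. You instead extract from the skew-torsion holonomy theorem the compact simple Lie algebra structure $\gk=(T_pM,[\cdot,\cdot])$ with $\gh_p(M,\tilde\nabla)=\operatorname{ad}(\gk)$ and compute $\mathfrak n_{\so(T_pM)}(\operatorname{ad}\gk)$ by hand: derivations of a simple Lie algebra are inner, and the commutant of the adjoint representation is $\bbr\cdot\id$ (absolute irreducibility does hold here, since ``orthogonal simple'' forces $\gk$ to be compact simple and the complexified adjoint representation is then irreducible), which meets $\so(T_pM)$ only in $0$. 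This makes the lemma self-contained modulo the skew-torsion holonomy theorem plus standard Lie theory, at the price of a longer argument; the paper's version is shorter but outsources the normalizer statement to the theory of $s$-representations. Two minor remarks: calling the system ``non-transitive'' before invoking the theorem is premature (non-transitivity is a conclusion; the hypothesis you actually use is $H_p(M,\tilde\nabla)\neq\SO(T_pM)$), though nothing is lost. And you prove $N(H_p(M,\tilde\nabla))_0=H_p(M,\tilde\nabla)$ rather than the literal equality with the full normalizer; your caution is well placed, since outer automorphisms of $\gk$ lying in $\SO(T_pM)$ (e.g.\ conjugation on $\mathfrak{su}(5)$) can enlarge the full normalizer, and the identity-component version is all that the ``as a consequence'' clause and the application in Theorem~\ref{main} (where $M$ is simply connected, so the holonomy group is connected) ever require.
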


\begin{proof}
Let $p \in M$ such that $D^p \neq 0$. We just have to observe that if $\Theta = D^p$, then $[T_pM, \Theta, H_p(M, \tilde\nabla)]$ is an irreducible skew-torsion holonomy system. So, by Lemma 3.4 in \cite{olmos-reggiani-2012}  we have that $H_p(M, \tilde\nabla)$ acts on  $T_pM$ as an $s$-representation. Therefore, $H_p(M, \tilde\nabla) = N(H_p(M, \tilde\nabla))$, a well-known fact on $s$-representations (see for example \cite[pp.~192]{berndt-console-olmos-2003}). 
\end{proof}

\begin{remark}
One can prove Lemma \ref{H=N(H)} directly from the skew-torsion holonomy theorem. In fact, for both skew-torsion holonomy systems $[T_pM, \Theta, N(H_p(M, \tilde\nabla))]$ and $[T_pM, \Theta, H_p(M, \tilde\nabla)]$ we have that $H_p(M, \tilde\nabla) = \Ad(G) = N(H_p(M, \tilde\nabla))$, where $G$ is the (simple) Lie group with Lie algebra $(T_pM, [\cdot,\cdot])$, with the Lie bracket given by $[v, w] = \Theta_vw$. But recall that in the proof of the skew-torsion holonomy theorem, the fact that $H_p(M, \tilde\nabla)$ acts as an $s$-representation is used. 
\end{remark}

\begin{remark}
Recall that if $M$ is locally irreducible, then $H_p(M, \tilde\nabla)$ acts irreducibly on $T_pM$. In fact, this is done in the proof of Theorem \ref{main} below. However, $H_p(M, \tilde\nabla)$ could act irreducibly on $T_pM$ even if $M$ splits off (as a Riemannian manifold).

In order to give a counterexample, let us consider on the sphere $S^n$ a canonical connection $\nabla^c \neq \nabla$ and let $D = \nabla - \nabla^c$. Indeed, if $n = 6$ or $n = 7$ we have such canonical connections associated with the nonstandard naturally reductive decompo\-sitions $S^6 = G_2/\SU(3)$ or $S^7 = \Spin(7)/G_2$. Let us consider on $M = S^n \times S^n$ the totally skew-symmetric tensor 
$$\tilde D_{(v,w)}(v',w') = (D_vv' + D_ww', D_w(v'+w'))$$
and the corresponding connection on $M$. It is not hard to see that $H_p(M, \tilde\nabla) = \SO(2n)$. Thus, the irreducibility of the $H_p(M, \tilde\nabla)$-action does not imply that $M$ is irreducible. In fact, we can write
$$\tilde D_{(v,w)} =
\left(\begin{matrix}
D_v & D_w\\
D_w & D_w
\end{matrix}\right).
$$

So, since $M$ is a product, the parallel transport $\tau_c$ along a curve $c$ splits along the projected curves $c_1$ and $c_2$. Thus,
\begin{align*}
(\tau_c^{-1})_*(\tilde D_{(v,w)}) &=
\left(\begin{matrix}
\tau_{c_1}^{-1} & 0\\
0 & \tau_{c_2}^{-1}
\end{matrix}\right)
\left(\begin{matrix}
D_v & D_w\\
D_w & D_w\end{matrix}\right)
\left(\begin{matrix}
\tau_{c_1} & 0\\
0 & \tau_{c_2}
\end{matrix}\right)\\
&= \left(\begin{matrix}
\tau_{c_1}^{-1}D_v\tau_{c_1} & \tau_{c_1}^{-1}D_w\tau_{c_2}\\
\tau_{c_2}^{-1}D_w\tau_{c_1} & \tau_{c_2}^{-1}D_w\tau_{c_2}
\end{matrix}\right)
\end{align*}
and this implies that $\gh_p(M, \tilde\nabla) = \so(2n)$.
\end{remark}

\begin{theorem}\label{main}
Let $M$ be a Riemannian manifold and let $\tilde\nabla$ be a metric connection on $M$ with skew-symmetric torsion $\tilde T$. Assume that $M$ is simply connected, complete and irreducible. If $\{e\} \neq H_p(M,\tilde\nabla) \neq \SO(T_pM)$, then $M$ is isometric to a Lie group with a bi-invariant metric or its symmetric dual. Moreover, if $\tilde T$ is invariant, then $\tilde\nabla$ is a canonical connection on $M$.
\end{theorem}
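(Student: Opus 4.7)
The strategy is to apply the Skew-torsion holonomy theorem to the triple $[T_pM,D^p,H_p(M,\tilde\nabla)]$ at a point where $D^p\neq 0$ and then to globalize the resulting algebraic structure. Since $H_p(M,\tilde\nabla)\neq\{e\}$ we have $\tilde\nabla\neq\nabla$, so $D$ is not identically zero; after conjugation by Riemannian parallel transport I may assume that the basepoint $p$ satisfies $D^p\neq 0$. By definition $\Theta:=D^p$ takes values in $\gh_p(M,\tilde\nabla)$ and is totally skew, so $[T_pM,D^p,H_p(M,\tilde\nabla)]$ is a skew-torsion holonomy system.

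I next verify that $H_p(M,\tilde\nabla)$ acts irreducibly on $T_pM$. The inclusion $\Hol(\nabla)\subset N(H_p(M,\tilde\nabla))$ forces $\Hol(\nabla)$ to permute the $H_p$-isotypic decomposition of $T_pM$; since $M$ is locally irreducible, $\Hol(\nabla)$ itself is irreducible, so $T_pM$ is a single $H_p$-isotypic component, i.e.\ $T_pM\cong W\otimes\bbr^k$ as $H_p$-modules with $H_p$ acting through the first tensor factor. If $k\geq 2$, write $D^p_{w\otimes u}=A(w,u)\otimes\mathrm{Id}$; the total skew-symmetry of $(x,y,z)\mapsto\langle D^p_xy,z\rangle$ then reads $A(w_1,u_1)w_2\otimes u_2+A(w_2,u_2)w_1\otimes u_1=0$, and choosing $u_1\perp u_2$ immediately yields $A\equiv 0$, contradicting $D^p\neq 0$. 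Hence $k=1$ and $H_p$ is irreducible. With the system irreducible and $H_p\neq\SO(T_pM)$, the Skew-torsion holonomy theorem endows $T_pM$ with a simple orthogonal Lie algebra structure via $[v,w]=\Theta_v w$, gives $H_p=\Ad(G)$ for the associated connected simple Lie group $G$, and shows that $\Theta$ is unique up to a scalar multiple.

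By Lemma \ref{H=N(H)} I obtain $\Hol(\nabla)\subset H_p=\Ad(G)$. Because $\Ad(G)$ preserves the bracket on $\Lie(G)\cong T_pM$, the pointwise bracket extends by Riemannian parallel transport to a $\nabla$-parallel $(1,2)$-tensor $B$ on $M$ restricting at each point to a simple Lie algebra isomorphic to $\Lie(G)$. Parallelism of $B$ gives $R(X,Y)\cdot B=0$, so $R(X,Y)$ is a derivation of this simple Lie algebra and hence equals $\operatorname{ad}_{\xi(X,Y)}$ for a unique $\xi(X,Y)\in T_pM$; the first Bianchi identity, the $\Ad(G)$-equivariance of $R$, and the uniqueness (up to scalar) of invariant $3$-forms provided by the Skew-torsion holonomy theorem pin down $\xi(X,Y)=cB(X,Y)$ for a single real constant $c$. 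Thus $\nabla R=0$, so $M$ is locally symmetric; simple connectedness and completeness promote this to globally symmetric, and the sign of $c$ identifies $M$ isometrically with $G$ equipped with a bi-invariant metric or with its non-compact symmetric dual.

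For the invariance clause, if $\tilde T$ is invariant under $\Iso_0(M)$, then so is the $3$-form $D$; at the basepoint, $D^p$ is a totally skew tensor with values in $\gh_p$, so the uniqueness assertion of the Skew-torsion holonomy theorem forces $D^p=c'B_p$ for some $c'\in\bbr$. Parallelism of both $D$ and $B$ then extends this to $D=c'B$ globally, and Remark \ref{line} identifies $\tilde\nabla=\nabla-c'B$ as a member of the affine line of canonical connections of $M$. I expect the main obstacles to be the irreducibility step (particularly ruling out the tensor-product case $k\geq 2$) and the curvature computation that converts the parallel simple Lie bracket into the assertion that $M$ is locally symmetric and specifically of bi-invariant Lie-group type, rather than merely having Riemannian holonomy contained in $\Ad(G)$.
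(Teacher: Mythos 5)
Your overall architecture matches the paper's: pass to a point with $D^p\neq 0$, view $[T_pM,D^p,H_p]$ as a skew-torsion holonomy system, apply the skew-torsion holonomy theorem to endow $T_pM$ with a simple orthogonal Lie bracket with $H_p=\Ad(G)$, and then identify $R$ with a multiple of the bi-invariant curvature tensor. Your irreducibility step is a genuinely different (and self-contained) argument: the paper instead applies the skew-torsion holonomy theorem to $[T_pM,\Theta,N(H_p)]$, concludes $N(H_p)$ is simple, and gets $H_p=N(H_p)$ because $H_p$ is a nontrivial connected normal subgroup; your isotypic-component computation ruling out multiplicity $k\geq 2$ via total skew-symmetry of $D^p$ is a valid alternative. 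The final paragraph on the invariance clause is essentially the paper's argument (uniqueness of $\Theta$ up to scalar gives $D=f\cdot(\nabla-\nabla^c)$, and invariance forces $f$ constant), modulo the unremarked fact that an $\Iso_0(M)$-invariant tensor on the resulting symmetric space is $\nabla$-parallel.

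The genuine gap is in the curvature step. You write that ``the first Bianchi identity, the $\Ad(G)$-equivariance of $R$, and the uniqueness (up to scalar) of invariant $3$-forms pin down $\xi(X,Y)=cB(X,Y)$.'' The $\Ad(G)$-equivariance of $R$ is asserted, not proved, and it is precisely the nontrivial Berger/Simons-type input of this part of the proof. Parallelism of $B$ only gives that each $R(X,Y)$ is a derivation of the bracket, hence lies in $\mathrm{ad}(\gg)$; it does not give that the tensor $R$ is invariant under $H_p=\Ad(G)$ (a priori $\xi$ could be any map $\Lambda^2\gg\to\gg$ compatible with the curvature symmetries, and equivariance is what reduces $\mathrm{Hom}(\Lambda^2\gg,\gg)$ to the one-dimensional space $\mathrm{Hom}_G(\Lambda^2\gg,\gg)=\bbr\cdot[\cdot,\cdot]$). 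Nor does $R$ being $\Hol(\nabla)$-invariant suffice, since a priori $\Hol(\nabla)$ may be strictly smaller than $\Ad(G)$. The paper closes exactly this hole by observing that $[T_pM,R,H]$ and $[T_pM,\tilde R,H]$ (with $\tilde R$ the curvature of the bi-invariant metric on $H$) are irreducible holonomy systems in Simons' sense and invoking Simons' Theorem 5 to conclude $R=\lambda\tilde R$ directly; Cartan--Ambrose--Hicks then yields the isometry with $G$ or, when $\lambda<0$, with its symmetric dual. You would need either to cite Simons' theorem at this point or to supply an independent proof that the space of algebraic curvature tensors with values in $\mathrm{ad}(\gg)$ is one-dimensional. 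A smaller omission in the same step: even granting $\xi_q=c(q)B_q$ at each point, you need the second Bianchi identity (or the Berger holonomy theorem, as the paper uses, via non-transitivity of $\Hol(\nabla)\subset H_p$) to see that $c$ is constant before concluding $\nabla R=0$.
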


\begin{proof}
Let $p \in M$ such that $D^p \neq 0$ and let $\Theta = D^p$ be the difference tensor evaluated at $p$. We denote $H = H_p(M, \tilde\nabla)$ to simplify notation. Then $[T_pM, \Theta, H]$ is an irreducible and non-transitive skew-torsion holonomy system. In fact, since $M$ is irreducible, we have that $\Hol(\nabla)$ acts irreducibly on $T_pM$, and then $N(H)$ acts irreducibly on $T_pM$. By making use of the skew-torsion holonomy theorem we get that $N(H)$ is a simple Lie group. Since $H$ is a normal subgroup of $N(H)$ it follows that $H = N(H)$. Thus, the holonomy group is non-transitive on the sphere and $M$ is a symmetric space.

Let $\gg$ be the Lie algebra $(T_pM, [\cdot,\cdot])$, where $[v,w] = \Theta_vw$, and let $G$ be the connected Lie group with Lie algebra $\gg$. Recall that $G$ is isomorphic to $H$ via the adjoint representation $\Ad: G \to H$ (cf.\ Preliminaries and \cite{olmos-reggiani-2012}). Let us consider the bi-invariant metric on $H$ induced by the inner product on $T_pM$.

Let $R$ be the curvature tensor of $M$ evaluated at $p$ and let $\tilde R$ be the curvature tensor of $H$ evaluated at $e \simeq p$. Observe that both $R$ and $\tilde R$ take values in the Lie algebra of $H$. So, $[T_pM, R, H]$ and $[T_pM, \tilde R, H]$ are irreducible holonomy systems in Simons' sense. Therefore, it follows from Theorem 5 in \cite{simons-1962} (see also \cite{olmos-2005b}) that $R = \lambda\tilde R$, for some $\lambda \neq 0$. Just by taking a scalar multiple of the Lie bracket on the Lie algebra $\gh$ of $H$, we may assume that $\lambda = \pm 1$.

If $\lambda = 1$, then by the Cartan-Ambrose-Hicks theorem we have that the identity $\id: T_pM \to T_pM$ extends to an isometry from $M$ onto the universal cover of $H$. On the other hand, if $\lambda = -1$, taking the symmetric dual $M^*$ of $M$, we have that $R^* = -R$. So, with the same argument as before, $M^*$ is isometric to a Lie group endowed with a bi-invariant metric.

Finally, let us fix a canonical connection $\nabla^c \neq \nabla$ on $M$. Then, from the skew-torsion holonomy theorem we have that $D = f(\nabla - \nabla^c)$, where $f: M \to \bbr$ is a differentiable function. Notice that the invariance of $\tilde T$ implies that $D$ is invariant. So, since $\nabla^c$ is an invariant connection, we have for all $v \in T_qM$
$$0 = \nabla^c_v D = v(f)(\nabla - \nabla^c) +f(q)\nabla^c_v(\nabla - \nabla^c) = v(f)(\nabla - \nabla^c).$$
Then $df = 0$, and  thus $f$ is a constant function. This implies that $\tilde\nabla$ is a canonical connection on $M$ (see Remark \ref{line}).
\end{proof}

\begin{remark}\label{rem:closed}
It follows from the proof of the above theorem that $H_p(M, \tilde\nabla)$ is always closed in $\SO(T_pM)$. In fact, if $H_p(M, \tilde\nabla)$ is transitive on $T_pM$, then $H_p(M, \tilde\nabla) = \SO(T_pM)$, by the skew-torsion holonomy theorem. Otherwise, if $\tilde\nabla \neq \nabla$, $H_p(M, \tilde\nabla)$ coincides with the restricted holonomy group of a symmetric space of the group type.
\end{remark}

Taking into account the proof of the above theorem, it makes sense to study the family of connections with skew-symmetric torsion on a compact Lie group $G$, which have the form $\nabla - fD$, where $f:G \to \bbr$ is a differentiable function (actually, we will see later that almost all have this form). We shall do this study in the next section.

\section{The holonomy group of metric connections with skew-symmetric torsion on compact Lie groups}\label{4}

Let $M$ be a (simple) compact Lie group endowed with a bi-invariant metric. We present $M$ as a symmetric space $M = (G \times G) / \diag(G \times G)$. We will denote with no distinction, the Riemannian manifold $M$ by $(G \times G)/\diag(G \times G)$ or just $G$. We also identify, in the natural way, the holonomy group of $M$ with $G$. Recall that the family of canonical connections on $M$ is the $1$-parameter family associated with the naturally reductive complements
$$\gm_\lambda = \{((\lambda + 1)X, (\lambda - 1)X): X \in \gg\}, \qquad \lambda \in \bbr.$$
In particular, $\gm_{0} = \mathfrak p$ in the symmetric decomposition $\gg \oplus \gg = \gk \oplus \mathfrak p$, and in this case the corresponding canonical connection is the Levi-Civita connection of $M$.

\begin{notation}
We will denote by $\nabla^\lambda$ the canonical connection associated with the reductive decomposition
$$\gg \oplus \gg = \diag(\gg \oplus \gg) \oplus \gm_\lambda.$$
The $\nabla^\lambda$-parallel transport along a curve $c$ will be denoted by $\tau_c^\lambda$. In particular, $\nabla^{0} = \nabla$ is the Levi-Civita connection of $M$.
\end{notation}

Notice that both $\nabla^1$ and $\nabla^{-1}$ have trivial holonomy, $\hol(\nabla^1) = \hol(\nabla^{-1}) = \{0\}.$ Moreover, these are the canonical connections that we get from the presentation $M = G/\{e\}$, where the action of $G$ on $M$ is given by left or right multiplication, respectively. For all other canonical connections $\nabla^\lambda$, $\lambda \neq \pm 1$, we have $\Hol(\nabla^\lambda) = \diag(G \times G) \simeq G$. In fact, the holonomy group of any canonical connection $\nabla^\lambda$, $\lambda \in \bbr$, coincides with the isotropy subgroup of the group of transvections of $\nabla^\lambda$. But the Lie algebra of the group of transvections of $\nabla^\lambda$ is given by $\tr(\nabla^\lambda) = [\gm_\lambda, \gm_\lambda] + \gm_\lambda$ (not a direct sum, in general).

From the results in the previous section we have that the group $H(M, \nabla^\lambda)$ associated with the difference tensor $D = \nabla - \nabla^\lambda$, $\lambda \neq 0$, coincides with $G$. In fact, this follows from Theorem \ref{main}, where we proved that $G \simeq H(M, \nabla^\lambda)$ (up to universal cover).

Let us fix $\lambda \neq 0$, and consider the difference tensor $D = \nabla - \nabla^\lambda$. The aim of this section is to study the holonomy group of the family of metric connections with skew-symmetric torsion given by
$$\tilde\nabla^f = \nabla -fD, \qquad f \in C^\infty(G),$$
that is, with difference tensor equal to $fD$.

First of all, recall that if $f$ is a constant function then $\tilde\nabla^f$ is a canonical connection (this is due to the fact that in a simple compact Lie group there exists only a line of canonical connections; see Remark \ref{line}). In particular, for $f \equiv 0$ we have $\tilde\nabla^0 = \nabla^0 = \nabla$, and for $f \equiv 1$ we have $\tilde\nabla^1 = \nabla^\lambda$. Recall the special case in which $\nabla^\lambda = \nabla^{\pm1}$ is a flat canonical connection. In this case, the geodesic symmetry moves $\nabla^\lambda$ into the opposite flat canonical connection $\nabla^{-\lambda} = \nabla^{\mp1}$ (because the geodesic symmetry reverses the sign of the difference tensor; see \cite[Theorem 1.1]{olmos-reggiani-2012}).

\medbreak 

Let $c(t)$ be a curve on $M$ with $c(0) = p$, and denote by $\tau_t$ (resp.\ $\tau_t^\lambda$) the $\nabla$-parallel (resp.\ $\nabla^\lambda$-parallel) transport along $c|_{[0, t]}$.

\begin{lemma}
We have that $\tau_{-t}^\lambda\tau_t$ is a $1$-parameter subgroup of $H_p(M, \nabla^\lambda) \simeq G$.
\end{lemma}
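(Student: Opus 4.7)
The plan is to identify $A(t) := (\tau_t^\lambda)^{-1}\circ\tau_t \in \SO(T_pM)$ as the solution of a linear ODE on $T_pM$, and show that, when $c$ is a geodesic of $\nabla$ (equivalently of $\nabla^\lambda$, since both connections share geodesics), this ODE has a constant coefficient lying in $\gh_p(M,\nabla^\lambda)$; its solution is then the exponential $1$-parameter subgroup $A(t) = \exp\bigl(-tD^p_{c'(0)}\bigr)$, which visibly takes values in $H_p(M,\nabla^\lambda)$.

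First I would write $\tau_t(v) = \tau_t^\lambda(A(t)v)$ for every $v\in T_pM$ and differentiate this identity along $c$. Using $\nabla = \nabla^\lambda+D$, the standard formula $\nabla^\lambda_{c'(t)}\tau_t^\lambda(u(t)) = \tau_t^\lambda(u'(t))$ (valid for any smooth curve $u(t)$ in $T_pM$), and the fact that $\tau_t(v)$ is $\nabla$-parallel along $c$, I obtain
\[
A'(t) = -(\tau_t^\lambda)^{-1}\circ D_{c'(t)}\circ \tau_t^\lambda\circ A(t),\qquad A(0)=\id,
\]
where $D_w u := D(w,u)$. No hypothesis on $c$ is needed up to this point.

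The crux is to show that the time-dependent generator $X(t) := -(\tau_t^\lambda)^{-1}D_{c'(t)}\tau_t^\lambda$ is in fact the constant endomorphism $-D^p_{c'(0)}$ of $T_pM$. Two ingredients combine: (i) $c$ is $\nabla^\lambda$-autoparallel, so $c'(t) = \tau_t^\lambda(c'(0))$; (ii) the difference $D = \nabla - \nabla^\lambda$ is $\nabla^\lambda$-parallel, because both $\nabla$ and $\nabla^\lambda$ are $(G\times G)$-invariant connections on the symmetric space $M$ (cf.\ Remark~\ref{rem:lit}). Property (ii) rephrases as $(\tau_t^\lambda)^{-1}\circ D_v\circ \tau_t^\lambda = D^p_{(\tau_t^\lambda)^{-1}v}$ for every $v\in T_{c(t)}M$; specialising $v = c'(t)$ and invoking (i) yields the claimed constancy.

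With $X(t)\equiv -D^p_{c'(0)}$, the ODE integrates at once to $A(t) = \exp\bigl(-tD^p_{c'(0)}\bigr)$, a genuine $1$-parameter subgroup of $\SO(T_pM)$. Finally, $D^p_{c'(0)}\in\gh_p(M,\nabla^\lambda)$ is immediate from the definition of $\gh_p$ by taking the constant curve $c\equiv p$ and the vector $v = c'(0)$. The main obstacle, and the heart of the proof, is establishing the constancy of $X(t)$: it is precisely where both the geodesic hypothesis on $c$ and the homogeneity-driven $\nabla^\lambda$-parallelism of $D$ must be used in concert.
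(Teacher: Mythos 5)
Your proof is correct and follows essentially the same route as the paper's: both derive the linear ODE for $A(t)=(\tau_t^\lambda)^{-1}\tau_t$, identify the generator as the constant $-D^p_{c'(0)}$ by combining parallelism of $D$ with the fact that $c'$ is parallel along the curve, and integrate to the exponential $e^{-tD^p_{c'(0)}}$, which lies in $H_p(M,\nabla^\lambda)$ by definition of $\gh_p$. The only differences are cosmetic or in your favor: you invoke $\nabla^\lambda$-parallelism of $D$ where the paper invokes $\nabla$-parallelism (both hold, since $D$ is invariant and both connections are canonical), and you state explicitly the restriction to geodesics, which the paper's hypothesis omits but which its own proof implicitly requires in the step $\tau_{-t}D_{c'(t)}\tau_t = D_{c'(0)}$.
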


\begin{proof}
In fact, let us show that the curve $\alpha(t) = \tau_{-t}^\lambda\tau_t \in \SO(T_pM)$ is always tangent to $H_p(M, \nabla^\lambda)$. Let $v \in T_pM$ and let $v(t) = \tau_t(v)$ be the parallel transport of  $v$ along $c(t)$. Clearly we have that 
$$\nabla_{c'(t)}^\lambda v(t) = -D_{c'(t)}v(t) = -D_{c'(t)}\tau_t(v).$$
On the other hand,
$$\nabla_{c'(t)}^\lambda v(t) = \frac{\partial}{\partial s}\bigg|_0 \tau_t^\lambda\tau_{-(t+s)}^\lambda\tau_{t+s}(v) = \tau_t^\lambda \frac{d}{dt}\tau_{-t}^\lambda\tau_t(v).$$
So,
$$\frac{d}{dt}\tau_{-t}^\lambda\tau_t = -\tau_{-t}^\lambda D_{c'(t)}\tau_t = -\tau_{-t}^\lambda\tau_t(\tau_{-t}D_{\gamma'(t)}\tau_t) = -\tau_{-t}^\lambda\tau_tD_{c'(0)},$$
since $D$ is a $\nabla$-parallel tensor. This differential equation has a unique solution
$$\alpha(t) = \tau_{-t}^\lambda\tau_t = e^{-tD_{c'(0)}}$$
which is always tangent to $H_p(M, \nabla^\lambda)$. Finally, it is obvious that $\alpha(t)$ is a $1$-parameter subgroup of $H_p(M, \nabla^\lambda)$ which concludes the proof of the lemma. 
\end{proof}

We have a similar result for the family of connections $\tilde\nabla^f$, where $f \in C^\infty(G)$. Denote by $\tilde\tau_t^f$ the $\tilde\nabla^f$-parallel transport along $c|_{[0,t]}$.

\begin{corollary}\label{contenida}
We have that $\tilde\tau_{-t}^f\tau_t \in H_p(M, \nabla^\lambda)$ for all $t$. In particular, we have that $\Hol(\tilde\nabla^f) \subset G$.
\end{corollary}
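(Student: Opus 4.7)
The plan is to mimic the argument of the preceding lemma, replacing $\nabla^\lambda$ by $\tilde\nabla^f$ throughout. Set $\beta(t):=\tilde\tau_{-t}^f\tau_t\in\SO(T_pM)$, with $\beta(0)=\id$. For a fixed $v\in T_pM$, let $v(t)=\tau_tv$ be its Riemannian parallel translate along $c$. Since $v(t)$ is $\nabla$-parallel and $\tilde\nabla^f=\nabla-fD$, one has
$$\tilde\nabla^f_{c'(t)}v(t)=-f(c(t))\,D_{c'(t)}v(t);$$
on the other hand, by the definition of $\tilde\nabla^f$-parallel transport, the same quantity equals $\tilde\tau_t^f\,\beta'(t)v$.

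Solving for $\beta'(t)$ and using that $D=\nabla-\nabla^\lambda$ is $\nabla$-parallel (a consequence of the canonical character of $\nabla^\lambda$ on a Lie group with bi-invariant metric, so that $\tau_{-t}D_{c'(t)}\tau_t=D_{\tau_{-t}c'(t)}$ as operators on $T_pM$) yields the linear matrix ODE
$$\beta'(t)=-f(c(t))\,\beta(t)\,D_{\tau_{-t}c'(t)},\qquad \beta(0)=\id.$$
By the very definition of $\gh_p(M,\nabla^\lambda)$, applied to the constant curve at $p$, we have $D_w\in\gh_p(M,\nabla^\lambda)$ for every $w\in T_pM$; hence the right-hand side is of the form $\beta(t)X(t)$ with $X(t)\in\gh_p(M,\nabla^\lambda)$. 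Standard Lie theory (applied to the closed connected subgroup $H_p(M,\nabla^\lambda)\subset\SO(T_pM)$, which is closed by Remark \ref{rem:closed}) then forces $\beta(t)\in H_p(M,\nabla^\lambda)$ for every $t$, proving the first assertion.

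For the consequence, let $c$ be an arbitrary loop based at $p$. From $\beta(1)=\tilde\tau_{-1}^f\tau_c$ we get $\tilde\tau_c^f=\tau_c\,\beta(1)^{-1}$. The first part gives $\beta(1)^{-1}\in H_p(M,\nabla^\lambda)$, while Lemma \ref{H=N(H)} together with the discussion at the start of the section shows $\tau_c\in\Hol(\nabla)\subset H_p(M,\nabla^\lambda)\simeq G$. Thus $\tilde\tau_c^f\in G$, and hence $\Hol(\tilde\nabla^f)\subset G$. The main technical points I expect to watch are the bookkeeping of the composition order in the derivative of $\beta$ (so that $X(t)$ multiplies $\beta(t)$ on the right, giving an ODE whose solutions are trapped in $H_p$), and the careful use of $\nabla D=0$ to replace $\tau_{-t}D_{c'(t)}\tau_t$ by an element of $\gh_p(M,\nabla^\lambda)$; both are routine, but essential for the conclusion.
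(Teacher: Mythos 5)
Your proof is correct and follows essentially the same route as the paper: both derive the ODE $\beta'(t)=-f(c(t))\,\beta(t)\,D_{\tau_{-t}c'(t)}$ from $\nabla D=0$ and conclude that $\beta(t)$ stays in $H_p(M,\nabla^\lambda)$. The only cosmetic difference is that the paper integrates the equation explicitly to $\tilde\tau_{-t}^f\tau_t=e^{-F(t)D_{c'(0)}}$ with $F(t)=\int_0^t f(c(s))\,ds$, whereas you invoke the general fact that solutions of a linear ODE with right-hand factor in $\gh_p(M,\nabla^\lambda)$ remain in the subgroup; your handling of $D_{\tau_{-t}c'(t)}$ and of the ``in particular'' clause is, if anything, slightly more careful than the paper's.
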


\begin{proof}
Just by following the argument in the proof of the previous lemma we get the following differential equation
$$\frac{d}{dt}\tilde\tau_{-t}^f\tau_t = -f(c(t))\tilde\tau_{-t}^f\tau_tD_{c'(0)},$$
which has a unique solution
$$\tilde\tau_{-t}^f\tau_t = e^{-F(t)D_{c'(0)}},$$
where $F(t) = \int_0^tf(c(s))\, ds$.
\end{proof}

Notice that $\tilde\tau_{-t}^f\tau_t$ is no longer a $1$-parameter subgroup of $H(M, \nabla^\lambda)$, unless $f$ is constant.

\begin{remark}
Since $\nabla$ and $\tilde\nabla^f$ have the same geodesics, any $\tilde\nabla^f$-affine transformation is a $\nabla$-affine transformation, since it maps geodesics into geodesics, and $\nabla$ is torsion-free. Since $M$ is compact, this implies that any $\tilde\nabla^f$-affine transformation in the connected component is an isometry of $M$ (see \cite[Lemma 3.6]{reggiani-2010}). That is, 
$$\Aff_0(\tilde\nabla^f) \subset \Iso_0(M) \subset \Iso(M).$$

In particular, for any $\varphi \in \Aff_0(\tilde\nabla^f)$, we have 
$$fD = \varphi_*(fD) = (f \circ \varphi) D,$$
since $\varphi$ preserves the torsion tensor of  $\tilde\nabla^f$ and $\Aff_0(\nabla^\lambda) = \Iso_0(M)$ (see \cite[Theorem 1.1]{olmos-reggiani-2012}). This gives an obstruction to the size of the affine group  of $\tilde\nabla^f$ for $f$ a non-constant function. 
\end{remark}

\begin{corollary}
If $\Aff_0(\tilde\nabla^f)$ is transitive on $M$, then $\tilde\nabla^f$ is a canonical connection on $M$.
\end{corollary}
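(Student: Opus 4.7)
My plan is to exploit the invariance identity $f\,D = (f\circ\varphi)\,D$ for $\varphi\in\Aff_0(\tilde\nabla^f)$ established in the preceding remark: if the $D$-factor can be cancelled pointwise, then $f$ is $\Aff_0(\tilde\nabla^f)$-invariant, and the assumed transitivity of the group will force $f$ to be constant. The constant case is then settled by the opening discussion of Section~\ref{4}, since any $\tilde\nabla^c$ with constant $c\in\bbr$ equals $(1-c)\nabla+c\nabla^\lambda$, which lies on the affine line $\mathcal L$ of Remark~\ref{line} and is therefore a canonical connection.

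The only cancellation that genuinely needs justification is that $D = \nabla - \nabla^\lambda$ has no zeros on $M$. Here I would argue that both $\nabla$ and $\nabla^\lambda$ are $\Iso_0(M)$-invariant (the Levi-Civita connection always is, and $\nabla^\lambda$ because $\Aff_0(\nabla^\lambda) = \Iso_0(M)$, as recalled in the remark). Hence $D$ is $\Iso_0(M)$-invariant, and since the hypothesis $\lambda\neq 0$ gives $D\neq 0$ at some point, the transitivity of $\Iso_0(M)$ on the compact Lie group $M$ propagates the non-vanishing to every $q\in M$.

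With $D^q\neq 0$ for all $q\in M$, the identity $f\,D = (f\circ\varphi)\,D$ collapses pointwise to $f = f\circ\varphi$ on $M$ for every $\varphi\in\Aff_0(\tilde\nabla^f)$. Given any two points $p,q\in M$, transitivity of $\Aff_0(\tilde\nabla^f)$ supplies some $\varphi$ with $\varphi(p)=q$, whence $f(p)=f(q)$; thus $f$ is constant on $M$, and the initial observation concludes the proof. I do not anticipate any genuine obstacle: the statement is essentially a direct corollary of the identity in the preceding remark, and the only substantive point is the nowhere-vanishing of $D$, which is dispatched by the invariance argument above.
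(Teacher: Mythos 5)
Your proof is correct and follows essentially the same route as the paper's: the paper likewise deduces from the preceding remark that $f$ is invariant under the transitive group $\Aff_0(\tilde\nabla^f)$ and hence constant, so that $\tilde\nabla^f$ lies on the line of canonical connections. Your extra step verifying that $D$ is nowhere vanishing (via invariance under the transitive isometry group) is a detail the paper leaves implicit, and it is handled correctly.
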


\begin{proof}
It is straightforward from the above remark. In fact, if $\Aff_0(\tilde\nabla^f)$ is transitive on $M$, then $f$ turns out to be invariant under a transitive subgroup of isometries and therefore it must be constant.
\end{proof}

In the case of a flat metric connection with skew-symmetric torsion we can say even more. In fact, we can prove the following theorem, which is a refinement of the Cartan-Schouten theorem \cite{cartan-schouten-1926, agricola-friedrich-2010}.

\begin{theorem}\label{flat}
Let $M$ be a complete, simply connected and irreducible Riemannian manifold. Let $\tilde\nabla$ be a metric connection on $M$ with the same geodesics as the Levi-Civita connection. If $M \neq S^7$ and $\tilde\nabla$ is flat (that is $\tilde R = 0$), then $M$ is a Lie group with a bi-invariant metric and $\tilde\nabla = \nabla^{\pm1}$ is a canonical connection on $M$.
\end{theorem}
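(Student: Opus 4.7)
The plan is to combine the Cartan--Schouten theorem with the skew-torsion holonomy theorem and the affine-line structure of canonical connections on a simple Lie group (Remark~\ref{line}). Since $M$ is complete, simply connected, irreducible, and admits a flat metric connection with skew-symmetric torsion, the Cartan--Schouten theorem \cite{cartan-schouten-1926, agricola-friedrich-2010} forces $M$ to be either $S^7$ or a simple compact Lie group with a bi-invariant metric; the hypothesis $M \neq S^7$ leaves $M = G$ for such a Lie group $G$, and we henceforth identify $T_pM \simeq \gg$ via left-translation.

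Next, I would reduce $\tilde\nabla$ to the form $\tilde\nabla = \nabla - fD_0$, where $D_0 := \nabla - \nabla^{-1}$ and $f\colon G \to \bbr$ is smooth. Since the Levi-Civita connection of $G$ is not flat, $\tilde\nabla \neq \nabla$ and therefore $D := \nabla - \tilde\nabla$ is not identically zero, giving $H_p(M,\tilde\nabla) \neq \{e\}$. Repeating the argument from the proof of Theorem~\ref{main}, the irreducibility of $M$ turns $[T_pM, D^p, H_p(M,\tilde\nabla)]$ into an irreducible, non-transitive skew-torsion holonomy system, and the rigidity clause of the skew-torsion holonomy theorem identifies $D^p$ with a scalar multiple of the unique (up to scaling) invariant $3$-form on the simple Lie algebra $T_pM \simeq \gg$. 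This yields $D = fD_0$ for a smooth function $f$.

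The last step is to extract $f$ from the condition $\tilde R = 0$. Plugging $A = fD_0$ into the standard identity
\[
\tilde R(X,Y) = R(X,Y) - (\nabla_X A)_Y + (\nabla_Y A)_X + [A_X, A_Y],
\]
and using $\nabla D_0 = 0$ (a direct Jacobi computation valid on bi-invariant Lie groups) together with $R(X,Y) = -[D_{0,X}, D_{0,Y}]$ (flatness of $\nabla^{-1}$), the flatness condition collapses to the tensorial identity
\[
(f^2 - 1)\,[[X,Y],Z] = 2\,(Xf)\,[Y,Z] - 2\,(Yf)\,[X,Z],
\]
valid at every $p \in M$ and for all tangent vectors $X, Y, Z$. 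In the root space decomposition of $\gg$ (rank $\geq 2$), choosing $X \in \mathfrak{h}$, $Y = E_\alpha$, $Z = E_\beta$ with $\alpha + \beta$ not a root forces $df_p(E_\alpha) = 0$; taking $X, Y \in \mathfrak{h}$ and $Z = E_\beta$ then shows $df_p|_{\mathfrak{h}}$ is simultaneously proportional to every root, so the non-proportionality of distinct roots yields $df_p|_{\mathfrak{h}} = 0$ and $f(p)^2 = 1$. The rank-$1$ case $\gg = \mathfrak{su}(2)$ is dispatched by a direct basis computation with $\{X_1, X_2, X_3\}$ and $[X_i, X_j] = \varepsilon_{ijk} X_k$. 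Connectedness of $M$ then forces $f \equiv \pm 1$, whence $\tilde\nabla = \nabla \mp D_0 = \nabla^{\pm 1}$.

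The main obstacle is securing the reduction $D = fD_0$: it requires verifying that the associated skew-torsion holonomy system is non-transitive (so the rigidity clause of the skew-torsion holonomy theorem applies), which is where the irreducibility assumption and the Cartan--Schouten identification of $M$ as a Lie group are jointly exploited, much as in the proof of Theorem~\ref{main}. Once this structural form is in hand, the curvature equation and the root-space bookkeeping deliver $df = 0$ and $f^2 = 1$ essentially by inspection.
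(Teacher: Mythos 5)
Your overall strategy is the same as the paper's: Cartan--Schouten identifies $M$ with a compact simple Lie group, the skew-torsion holonomy theorem reduces $\tilde\nabla$ to the form $\nabla - fD$, and the vanishing of $\tilde R$ forces $f$ to be constant with the right value. Your endgame differs in flavour: where the paper simply picks indices $i,j$ so that $D_i$, $D_j$, $[D_i,D_j]$ are linearly independent (possible because $D$ induces a simple orthogonal Lie bracket) and reads off $f^2 \equiv 1/\lambda^2$ from the curvature identity, you run a root-space computation in $\gg$ plus a separate $\mathfrak{su}(2)$ case. Your version works but is heavier; the linear-independence trick buys a one-line conclusion and avoids the rank dichotomy.

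There is, however, a genuine gap at precisely the step you flag as ``the main obstacle'', and the way you propose to close it does not work. You assert that irreducibility of $M$ makes $[T_pM, D^p, H_p(M,\tilde\nabla)]$ non-transitive ``as in the proof of Theorem~\ref{main}''; but in Theorem~\ref{main} non-transitivity is the \emph{hypothesis} $H_p(M,\tilde\nabla) \neq \SO(T_pM)$, not a consequence of irreducibility, and nothing in the flatness assumption a priori prevents the torsion from generating all of $\so(T_pM)$ at some point. In that case the uniqueness clause of the skew-torsion holonomy theorem is unavailable and the reduction $D = fD_0$ breaks down. The paper closes this case by a separate argument: if $\sspan\{\tilde D_v : v \in T_pG\} = \so(\gg)$ at some point, then by Agricola--Friedrich the space carrying a flat connection with such torsion has constant sectional curvature, hence is covered by $S^3 = \Spin(3)$, where every $3$-form is a scalar multiple of the fixed one and the reduction is trivial. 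You need this (or an equivalent) input; without it your proof is incomplete.
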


Since we assume that $M \neq S^7$, it follows from the Cartan-Schouten theorem \cite{cartan-schouten-1926,agricola-friedrich-2010} that $M$ is a Lie group. So we can keep the notation of this section. Before giving the proof of Theorem \ref{flat} we need the following useful remarks. 

\begin{remark}
Let $X, Y \in \gm_\lambda$ and $\tilde X, \tilde Y$ be the Killing fields induced by $X, Y$ with initial conditions $\tilde X(p) = X$, $\tilde Y(p) = Y$, where we identify $T_pM$ with $\gm_\lambda$ in the natural way. It is a well-known fact that the Levi-Civita connection $\nabla$ and the canonical connection $\nabla^\lambda$ are given by
$$(\nabla_{\tilde X}\tilde Y)_p = \frac{1}{2}[\tilde X, \tilde Y]_p \simeq -\frac{1}{2}[X, Y]_{\gm_\lambda}$$ 
and 
$$(\nabla^\lambda_{\tilde X}\tilde Y)_p = [\tilde X, \tilde Y]_p \simeq -[X, Y]_{\gm_\lambda}.$$ 
See, for instance, \cite{reggiani-2010}. Taking into account these formulas, it is not hard to show that the relation between the difference tensors $D^\lambda = \nabla - \nabla^\lambda$ and $D^\mu = \nabla - \nabla^\mu$ is given by 
$$\frac{\mu}{\lambda}D^\lambda = D^\mu, \qquad \lambda, \mu \in \bbr,\, \lambda \neq 0.$$
In particular, for all $\lambda \neq 0$, we get the two flat canonical connections with difference tensor $D^{\pm 1} = \pm\frac{1}{\lambda}D^\lambda.$
\end{remark}

\begin{remark}\label{local} Let $\tilde\nabla^f = \nabla - fD$. We give an explicit formula for the curvature tensor $\tilde R^f$ of $\tilde\nabla^f$ in local coordinates $x_i$. By abuse of notation we denote the coordinate vector fields by $i = \partial/\partial x_i$. It is not hard to check that the expression for the $\tilde\nabla^f$-curvature is
$$\tilde R_{i,j}^f = R_{i,j} + f^2[D_i,D_j] + f\left([\nabla_j, D_i] - [\nabla_i, D_j]\right) + \frac{\partial f}{\partial x_j}D_i - \frac{\partial f}{\partial x_i}D_j,$$
where $[\nabla_j, D_i]k = \nabla_j(D_ik) - D_i(\nabla_jk)$. By making use of the fact that there are two flat canonical connections with $f \equiv \pm\frac{1}{\lambda}$, we get 
$$[\nabla_j, D_i] - [\nabla_i, D_j] = 0.$$
So, the above formula is simplified to 
$$\tilde R^f_{i,j} = R_{i, j} + f^2[D_i, D_j] + \frac{\partial f}{\partial x_j}D_i - \frac{\partial f}{\partial x_i}D_j.$$
\end{remark}

\begin{proof}[Proof of Theorem \ref{flat}]
  For each $p \in G$ we consider the Lie subalgebra $\gh_p \subset \so(\gg)$ defined by $\gh_p = \sspan\{\tilde D_v: v \in T_pG\}$ (algebraic span), with the usual identifications (see Remark \ref{rem:lit} and \cite{agricola-friedrich-2004}).

If $\gh_p \neq \so(\gg)$ for all $p \in G$, then $\tilde D$ is a scalar multiple of $D$ at each point (this follows from the skew-torsion holonomy theorem) and therefore $\tilde\nabla = \tilde\nabla^f$ for some $f \in C^\infty(G)$. On the other hand, if there is a $p \in G$ such that $\gh_p = \so(\gg)$ then $G$ has constant sectional curvatures (see \cite{agricola-friedrich-2010}) and it must be a sphere, $\tilde G = \Spin(3) = S^3$ (universal cover). But in the $3$-dimensional case, there is only one algebraic $3$-form, up to a multiple. So, $\tilde\nabla = \tilde\nabla^f$ for some $f \in C^\infty(G)$.

Then we may assume that $\tilde\nabla$ has the form $\tilde\nabla = \tilde\nabla^f$ for some $f \in C^\infty(G)$.

Now, the previous remark  gives a (nonlinear) system of partial differential equations for a flat connection $\tilde\nabla^f$,
\begin{equation*}\tag{*}
0 = R_{i, j} + f^2[D_i, D_j] + \frac{\partial f}{\partial x_j}D_i - \frac{\partial f}{\partial x_i}D_j, \qquad i \neq j.
\end{equation*}

If we show that system (*) does not admit a non-constant solution, then  we will prove that $\tilde\nabla^f$ is a canonical connection. Since there are two constant solutions $f \equiv \pm\frac{1}{\lambda}$, we have that $R_{i,j} = -\frac{1}{\lambda^2}[D_i, D_j]$, and the above equation becomes
$$0 = \left(f^2 - \frac{1}{\lambda^2}\right)[D_i, D_j] + \frac{\partial f}{\partial x_j}D_i - \frac{\partial f}{\partial x_i}D_j.$$
Now, since $D$ induces a simple orthogonal Lie bracket in each tangent space (by the skew-torsion holonomy theorem), we can always choose an index  couple $i,j$ such that $D_i, D_j$ and $[D_i, D_j]$ is a linear independent set (locally). So, $f^2 - \frac{1}{\lambda^2} \equiv 0$ and therefore $f \equiv \pm \frac{1}{\lambda}$.
\end{proof}

Next, we prove that the holonomy group of a generic metric connection with skew-symmetric torsion $\tilde\nabla$ on $G$ coincides with the Riemannian holonomy.

\begin{theorem}\label{hol=G}
Let $G$ be a simple, simply connected Lie group endowed with a bi-invariant metric and let $\gg$ be the Lie algebra of $G$. Let $\tilde\nabla$ be a metric connection with skew-symmetric torsion on $G$. If $\tilde\nabla$ is not flat and $H_e(G, \tilde\nabla) \neq \SO(\gg)$, then $\Hol(\tilde\nabla) = G$.
\end{theorem}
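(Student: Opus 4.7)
The plan is to first reduce $\tilde\nabla$ to the form $\tilde\nabla^f = \nabla - fD$ (with $f \in C^\infty(G)$) studied earlier in this section, and then extract the holonomy from the explicit curvature formula of Remark~\ref{local} via the Ambrose-Singer theorem; the opposite inclusion $\Hol(\tilde\nabla) \subseteq G$ is free from Corollary~\ref{contenida}. For the reduction, setting $\tilde D = \nabla - \tilde\nabla$, at each $p \in G$ the triple $[T_pG, \tilde D^p, H_p(G, \tilde\nabla)]$ is an irreducible, non-transitive skew-torsion holonomy system (irreducibility follows as in the proof of Theorem~\ref{main}; non-transitivity is the hypothesis $H_p \neq \SO(T_pG)$). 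Since $D^p$ takes values in the same subalgebra $\gh_p \subset \so(T_pG)$ and induces the same simple Lie bracket on $T_pG$ (up to scalar), the uniqueness clause (part~(3)) of the skew-torsion holonomy theorem forces $\tilde D^p = f(p)\, D^p$; because $D$ is $\nabla$-parallel and nowhere zero, the scalar $f$ is globally smooth, so $\tilde\nabla = \tilde\nabla^f$.

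Next, Remark~\ref{local} combined with the identity $R_{v,w} = -\tfrac{1}{\lambda^2}[D_v, D_w]$ (from the proof of Theorem~\ref{flat}) yields
$$\tilde R^f_{v,w}\big|_p = \left(f(p)^2 - \tfrac{1}{\lambda^2}\right)[D_v, D_w] + df_p(w)\, D_v - df_p(v)\, D_w.$$
Since $\tilde\nabla$ is not flat, Theorem~\ref{flat} excludes $f \equiv \pm 1/\lambda$, so by continuity there exists $p_0 \in G$ with $c(p_0) := f(p_0)^2 - 1/\lambda^2 \neq 0$. Writing $[D_v, D_w] = k\, D_{[v,w]}$, the task reduces to showing that the vectors $Y(v,w) := c(p_0) k\, [v,w] + df_{p_0}(w)\, v - df_{p_0}(v)\, w$ span $T_{p_0}G \simeq \gg$ as $v, w$ range over $T_{p_0}G$. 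When $df_{p_0} = 0$ this is immediate from $[\gg, \gg] = \gg$; in general it follows from a short cohomological argument exploiting simplicity of $\gg$ and the rank $\geq 2$ clause in the skew-torsion holonomy theorem.

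Finally, $\gh \subset \so(TG)$ is $\nabla$-parallel (because $D$ is) and, by Corollary~\ref{contenida} (which writes $\tilde\tau$ as the composition of $\tau$ with an element of $H$), also $\tilde\nabla$-parallel. The Ambrose-Singer theorem then gives $\hol_{p_0}(\tilde\nabla) \supseteq \gh_{p_0} \simeq \gg$, and combined with $\Hol(\tilde\nabla) \subseteq G$ from Corollary~\ref{contenida} this forces $\Hol(\tilde\nabla) = G$. I expect the main obstacle to be the surjectivity of $Y$ in the mixed regime where $df_{p_0} \neq 0$ and $c(p_0) \neq 0$: the $df \wedge D$ perturbation prevents a direct use of $[\gg, \gg] = \gg$, and a cohomological (or coadjoint-representation-theoretic) argument is needed to close the gap.
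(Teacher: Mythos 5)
Your overall strategy is the paper's: reduce to $\tilde\nabla=\tilde\nabla^f$ via the skew-torsion holonomy theorem, substitute $R_{v,w}=-\frac{1}{\lambda^2}[D_v,D_w]$ into the curvature formula of Remark~\ref{local}, show that the curvature endomorphisms at a well-chosen point span $\gg\simeq\gh$, and combine this with $\Hol(\tilde\nabla^f)\subseteq G$ from Corollary~\ref{contenida}. The reduction and the two inclusions are essentially fine (though note that the constant-$f$ case, where no point $p_0$ of the kind you want exists in the regime $df\neq 0$, should be dispatched separately via the earlier observation that $\Hol(\nabla^\mu)=G$ for $\mu\neq\pm1$).

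The genuine gap is exactly the step you flag yourself: the spanning of $Y(v,w)=c(p_0)k[v,w]+df_{p_0}(w)v-df_{p_0}(v)w$ when $df_{p_0}\neq0$. You cannot in general dodge this case by choosing $p_0$ better, since all critical values of a non-constant $f$ could equal $\pm1/\lambda$ (e.g.\ a function whose only critical points are a maximum at $1/\lambda$ and a minimum at $-1/\lambda$), so there need be no point where $c\neq0$ and $df=0$ simultaneously; and the promised ``cohomological argument'' is never supplied. The paper closes this gap with a concrete linear-algebra device rather than cohomology: choose $p$ with $\grad(f)_p\neq0$ and $f(p)^2\neq1/\lambda^2$ (possible for non-constant $f$, since $f^2\equiv1/\lambda^2$ on an open set would force $df=0$ there), fix $v=\grad(f)_p$ and let $w$ range over $\grad(f)_p^{\perp}$. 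The term $df(w)v$ then vanishes and $\tilde R^f_{v,w}=c\,[D_v,D_w]-\|\grad(f)_p\|^2D_w$, whose inner product with $D_w$ equals $-\|\grad(f)_p\|^2\|D_w\|^2<0$ because the bracket on $\gh$ is orthogonal (equivalently, $c\,k\operatorname{ad}_v-\|\grad(f)_p\|^2\operatorname{id}$ is injective since $\operatorname{ad}_v$ is skew-symmetric). Hence $w\mapsto\tilde R^f_{v,w}$ is injective on $\grad(f)_p^{\perp}$, so $\dim\hol(\tilde\nabla^f)\geq\dim\gg-1$, and one concludes because a codimension-one subalgebra of a compact Lie algebra is automatically an ideal, which is impossible in the simple $\gg$. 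Until you either adopt this argument or actually write out your representation-theoretic one, the central step of your proof is missing.
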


\begin{proof}
Since $H_e(G, \tilde\nabla) \neq \SO(\gg)$ we may assume that $\tilde\nabla = \tilde\nabla^f$ for some $f \in C^\infty(G)$. In fact, this is done in the proof of Theorem \ref{flat}. Since $\tilde\nabla$ is not flat, from Theorem \ref{flat} we have that $\tilde\nabla^f \neq \nabla^{\pm 1}$.

Let $p \in G$ be such that $|f(p)| \neq 1$ and $\grad(f)_p \neq 0$. We choose local coordinates $x_i$ like in Remark \ref{local}. Without loss of generality we can assume  that $\partial /  \partial x_1 = \grad(f)$ is the gradient field of $f$ near $p$ and the coordinate fields $\partial / \partial x_i$ are orthogonal at $p$. Then, the linear map $\tilde R^f_{1, \cdot}: \gg \to \hol(\tilde\nabla^f)$ is injective when restricted to the normal space to $\grad(f)_p$. In fact, by previous computations we have that
$$\tilde R^f_{1,j}|_p = \left(f(p)^2 - \frac{1}{\lambda^2}\right)[D_1,D_j]_p - \|\grad(f)_p\|^2 D_j|_p.$$
Since $D$ induces an orthogonal Lie algebra, $[D_1, D_j]$ is orthogonal to $D_j$ at $p$. Then $\tilde R^f_{1,j} \neq 0$ for all $j \ge 2$, and therefore $\dim\hol(\tilde\nabla^f) \ge \dim \gg - 1$. If $\dim \hol(\tilde\nabla^f) = \dim \gg -1$, then $\hol(\tilde\nabla^f)$ is a codimension $1$ ideal of $\gg$, which is absurd. So, $\hol(\tilde\nabla^f) = \hol(\nabla) = \gg$.

Finally, the connected component of $\Hol(\tilde\nabla^f)$ coincides with $G = \Hol(\nabla)$. From Corollary \ref{contenida} it follows that $\Hol(\tilde\nabla^f)$ is connected and coincides with $G$.
\end{proof}

\section*{Acknowledgements}
The author would like to thank Carlos Olmos for useful discussions and comments on the topics of this article. The author is also grateful to the referee for valuable suggestions which helped with improving the final version of the article. 

This work was supported by CONICET and partially supported by ANCyT, Secyt-UNC and CIEM.

\bibliography{/home/snr/math/bibtex/mybib.bib}{}

\providecommand{\bysame}{\leavevmode\hbox to3em{\hrulefill}\thinspace}
\providecommand{\MR}{\relax\ifhmode\unskip\space\fi MR }
\providecommand{\MRhref}[2]{%
  \href{http://www.ams.org/mathscinet-getitem?mr=#1}{#2}
}
\providecommand{\href}[2]{#2}
\begin{thebibliography}{CDSO11}

\bibitem[AF04]{agricola-friedrich-2004}
I.~Agricola and Th. Friedrich, \emph{On the holonomy of connections with
  skew-symmetric torsion}, Math. Ann. \textbf{328} (2004), no.~4, 711--748.

\bibitem[AF10]{agricola-friedrich-2010}
\bysame, \emph{A note on flat metric connections with antisymmetric torsion},
  Differential Geom. Appl. \textbf{28} (2010), no.~4, 480--487.

\bibitem[Agr06]{agricola-2006}
I.~Agricola, \emph{The {Srn\'{\i }} lectures on non-integrable geometries with
  torsion}, Arch. Math. (Brno) \textbf{42} (2006), no.~5, 5--84.

\bibitem[AS53]{ambrose-singer-1953}
W.~Ambrose and I.~M. Singer, \emph{A theorem on holonomy}, Trans. Amer. Math.
  Soc. \textbf{75} (1953), 428--443.

\bibitem[BCO03]{berndt-console-olmos-2003}
J.~Berndt, S.~Console, and C.~Olmos, \emph{Submanifolds and holonomy}, Research
  Notes in Mathematics, vol. 434, Chapman \& Hall/CRC, Boca Raton, 2003.

\bibitem[Ber55]{berger-1955}
M.~Berger, \emph{Sur les groupes d'holonomie homog\`enes de vari\'et\'es \`a
  connexion affine et des vari\'et\'es riemanniennes}, Bull. Soc. Math. France
  \textbf{83} (1955), 270--330.

\bibitem[CDSO11]{console-discala-olmos-2011}
S.~Console, A.~J. Di~Scala, and C.~Olmos, \emph{A {B}erger type normal holonomy
  theorem for complex submanifolds}, Math. Ann. \textbf{351} (2011), no.~1,
  187--214.

\bibitem[CS26]{cartan-schouten-1926}
\'E. Cartan and J.~A. Schouten, \emph{On {R}iemannian geometries admitting an
  absolute parallelism}, Proceedings Amsterdam \textbf{29} (1926), 933--946.

\bibitem[Nag13]{nagy-2007}
P.-A. Nagy, \emph{Skew-symmetric prolongations of {L}ie algebras and
  applications}, J. Lie Theory \textbf{23} (2013), no.~1, 1--33.

\bibitem[Olm93]{olmos-1993}
C.~Olmos, \emph{Isoparametric submanifolds and their homogeneous structures},
  J. Differential Geom. \textbf{38} (1993), no.~2, 225--234.

\bibitem[Olm05a]{olmos-2005}
\bysame, \emph{A geometric proof of the {B}erger holonomy theorem}, Ann. of
  Math. \textbf{161} (2005), no.~1, 579--588.

\bibitem[Olm05b]{olmos-2005b}
\bysame, \emph{On the geometry of holonomy systems}, Enseign. Math. \textbf{51}
  (2005), 335--349.

\bibitem[OR12]{olmos-reggiani-2012}
C.~Olmos and S.~Reggiani, \emph{The skew-torsion holonomy theorem and naturally
  reductive spaces}, J. Reine Angew. Math. \textbf{664} (2012), 29--53.

\bibitem[Reg10]{reggiani-2010}
S.~Reggiani, \emph{On the affine group of a normal homogeneous manifold}, Ann.
  Global Anal. Geom. \textbf{37} (2010), no.~4, 351--359.

\bibitem[Sim62]{simons-1962}
J.~Simons, \emph{On the transitivity of holonomy systems}, Ann. of Math.
  \textbf{76} (1962), 213--234.

\bibitem[Tho91]{thorbergsson-1991}
G.~Thorbergsson, \emph{Isoparametric foliations and their buildings}, Ann. of
  Math. \textbf{133} (1991), no.~2, 429--446.

\end{thebibliography}
\bibliographystyle{amsalpha}

\end{document}